\renewcommand{\baselinestretch}{1} 
\newcommand{\vect}[1]{\mathbbold{#1}}
\newcommand{\vones}[1][]{\vect{1}_{#1}}
\newcommand{\vzeros}[1][]{\vect{0}_{#1}}
\DeclareSymbolFont{bbold}{U}{bbold}{m}{n}
\DeclareSymbolFontAlphabet{\mathbbold}{bbold}
\renewenvironment{proof}{\par\noindent{\textbf{\textit{Proof}}\
}}{\hfill $\square$}
\newcommand{\setdef}[2]{\{#1 \; : \; #2\}}
\newcommand\oprocendsymbol{\hbox{$\square$}} 
\newcommand\oprocend{\relax\ifmmode\else\unskip\hfill\fi\oprocendsymbol}
\newcommand{\real}{\mathbb{R}}
\newtheorem{theorem}{Theorem}[section]
\newtheorem{proposition}[theorem]{Proposition}
\newtheorem{lemma}{Lemma}[section]
{\theorembodyfont{\rmfamily} 
\newtheorem{remark}[theorem]{Remark}

\newtheorem{thm:loc_exp_stab}{Theorem}}
\title{On Resistive Networks of Constant Power Devices
\thanks{} 
}
\author{John W. Simpson-Porco, Florian D\"{o}rfler and Francesco Bullo
  \thanks{This work was supported in part by the National Science Foundation NSF CNS-1135819 and by the National Science and Engineering Research Council of Canada. J. W. Simpson-Porco and F. Bullo are with the Center for Control, Dynamical Systems and Computation, University of California at Santa Barbara {\tt \{johnwsimpsonporco,
      bullo\}@engineering.ucsb.edu}. F. D\"orfler is with the Automatic Control Laboratory, Swiss Federal Institute (ETH) Zurich 
{\tt dorfler@ethz.ch}.
Copyright (c) 2014 IEEE. Personal use of this material is permitted.
However, permission to use this material for any other purposes must be
obtained from the IEEE by sending an email to {\tt pubs-permissions@ieee.org}.
}       } 
\begin{document}



\maketitle
\thispagestyle{empty}
\pagestyle{empty}



\begin{abstract}
This brief examines the behavior of DC circuits comprised of resistively interconnected constant power devices, as may arise in DC microgrids containing micro-sources and constant power loads. We derive a sufficient condition for all operating points of the circuit to lie in a desirable set, where the average nodal voltage level is high and nodal voltages are tightly clustered near one another. Our condition has the elegant physical interpretation that the ratio of resistive losses to total injected power should be small compared to a measure of network heterogeneity, as quantified by a ratio of conductance matrix eigenvalues. Perhaps surprisingly, the interplay between the circuit topology, branch conductances and the constant power devices implicitly defines a nominal voltage level for the circuit, despite the explicit absence of voltage-regulated nodes.
\end{abstract}



\section{Introduction}
\label{Section: Introduction}

%
The existence and uniqueness of operating points to nonlinear resistive circuits is a classic topic in circuit theory, with nonlinearities usually entering in the form of voltage or current-controlled nonlinear conductances \cite{LOC-CAD-EDK:87}.
Many elegant approaches have been devised to study such nonlinear circuits, from fixed point and global inverse function theorems to the topological concept of the degree of a mapping \cite{ANW:75}.
%
These approaches offer a binary answer to the question of operating point existence by inferring existence from continuity and monotonicity, or from sector boundedness conditions on the  current-voltage characteristics.
These results offer little guidance in estimating the locations of operating points in voltage-space, or in quantifying their behavior as a function of the circuit topology, branch conductances, and loads.

In this brief we consider a modification of classic nonlinear resistive networks, with two distinguishing features. First, the nonlinearities considered herein arise not from nonlinear branch conductances internal to the circuit, but from externally-connected \emph{constant-power devices} (CPDs).
In contrast to conductance models $i = g(v)$ relating branch-wise voltage and current variables, an externally connected CPD constrains the relationship at the connection port between the port voltage $V$ and the port current injection $I$.
This hyperbolic constraint $I = P/V$ results in a negative (resp. positive) incremental conductance $\mathrm{d}I/\mathrm{d}V$ for sources with $P > 0$ (resp. for loads with $P < 0$), and has been observed to lead to instability in power electronic \cite{AMR-AE:09}, automotive \cite{AD-AK-CHR-GAW:06}, and power transmission systems \cite{IAH:02}. Constant power loads are the most challenging component of the standard ZIP (constant-impedance/current/power) static load model, as their presence often renders analysis and design problems analytically intractable.
Second, as a consequence of the CPDs regulating the power sinked or sourced through each port, the circuit lacks voltage-regulated nodes, and therefore lacks a nominal voltage level around which all nodal voltages cluster. 
For example, in power systems this nominal level is typically supplied by voltage-regulated buses such as generators and points of common coupling \cite{IAH:02,BG-JWSP-FD-SZ-FB:13zb,JWSP-FD-FB:13h}.
%
%
%
%
%
%

{This work is motivated by the increasing deployment of microgrids. These small-footprint power systems offer reliability and flexibility by locally managing generation, storage, and load \cite{JMG-JCV-JM-LGDV-MC:11}.
While microgrids can be directly connected to a utility, their major benefit comes from the ability to disconnect (or ``island'') themselves and operate independently.
Resistive circuits with CPDs arise in islanded DC and AC microgrids consisting of constant-power loads (CPLs) and micro-sources \cite{AK-CNO:11,DM-PM-BNM-SP-BD:12,SS-RO-GB-MM-RG:13}. 
{As an example of a micro-source, photovoltaic panels 
are controlled for maximum power point tracking \cite{CR-GA:07}, and appear to the network as constant sources of power.}
%
We refer the reader to \cite{JMG-JCV-JM-LGDV-MC:11,AK-CNO:11,DM-PM-BNM-SP-BD:12,SS-RO-GB-MM-RG:13} and the references therein for detailed modeling information on micro-sources and CPLs.
Aside from this key application area and circuit-theoretic interest, the proposed analysis and its extensions may prove useful for novel forms of circuit reduction \cite{fd-fb:11d}, synthesis \cite{JWSP-FD-FB:13h}, as a tool for reactive power flow analysis \cite{ECF-LAA-LABT:08,BG-JWSP-FD-SZ-FB:13zb}, and in power transmission networks when generators reach their capability limits \cite{IAH:02}.}

In Section \ref{Sec:Models} we develop the resistive circuit equations, and begin our analysis in Section \ref{Sec:Dec} by proposing a novels decomposition of the vectorized circuit equation.
This decomposition can be thought of as separating the model into two equations: the first describes the resistive losses, while the second describes a complementary ``lossless'' flow of power.
We present a simple and intuitive necessary condition for the existence of operating points. 
{To build intuition for the general analysis which follows,} in Section \ref{Sec:TwoNode} we study in detail the simplest CPD circuit consisting of two ports. We provide a necessary and sufficient condition for the existence of an operating point with positive voltages.
%
%
%
In Section \ref{Sec:Suff} we present our main result, partially generalizing the results of Section \ref{Sec:TwoNode} to arbitrary circuit topologies. We present sufficient parametric conditions for all circuit operating points to belong to an appropriately defined operating region, in which the average nodal voltage is high and all voltages are tightly clustered near one another. 
A loose translation of the condition is the following: the ratio of resistive losses to total transmitted power should be small when compared to the (inverse) network heterogeneity, as quantified by the ``eigenratio'' \cite{ACM-CSZ-JK:05} of conductance matrix. We regard our analysis as a first step in the theoretical understanding of the operating points of CPD circuits. While intuition suggests that the absence of voltage-regulated nodes will result in the circuit displaying a disorganized voltage profile, our main result shows that the voltage profiles of such circuits are quite uniform under appropriate conditions.

The remainder of this section introduces some notation and preliminaries. Given a vector $x \in \real^n$, $\mathrm{diag}(x)$ is the associated diagonal matrix. The $n \times n$ identity matrix is $I_n$, and $\vones[n]$ (resp. $\vzeros[n]$) is the $n$-dimensional vector of all ones (resp. all zeros). The set $\vones[n]^{\perp} \triangleq \setdef{x \in \real^n}{\vones[n]^Tx = 0}$ is the subspace of all vectors in $\real^n$ orthogonal to $\mathrm{span}(\vones[n])$. 
For a vector $x \in \real^n$, $\|x\|_1 = \sum_{k=1}^n |x_k|$, $\|x\|_2 = (\sum_{i=k}^n x_k^2)^{1/2}$ and $\|x\|_{\infty} = \max_k |x_k|$. For a positive semidefinite symmetric $n \times n$ matrix $M$, $\|M\|_2 = \lambda_{\rm max}(M)$ (the largest eigenvalue), while $\|M\|_{\infty} = \max_i \sum_{k=1}^n |M_{ik}|$.

\section{Derivation of Network Equations}
\label{Sec:Models}

\begin{figure}[t!]
\centering
\includegraphics[width=0.8\columnwidth]{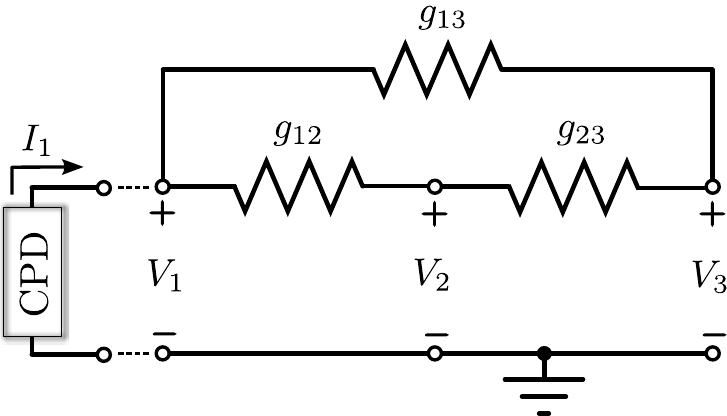}
\caption{{A resistive $n$-port ($n$=3) with constant power devices. By convention, constant power loads satisfy $V_kI_k = P_k < 0$.}}
\label{Fig:Circuit_4}
\vspace{-1em}
\end{figure}

{In this paper we consider linear resistive circuits with nodes $\{0,\ldots,n\}$ for a total of $n+1$ nodes. Nodes $\{1,\ldots,n\}$ are the ones of interest, which in our motivating example of a DC microgrid are locations where power will either be produced or consumed. We assume without loss of generality that these nodes form a connected network. {For the short transmission lines we consider, the $0$th node (ground) is electrically isolated
from the others and taken as the datum.} We may therefore consider the circuit as a ``grounded $n$-port'' where each port voltage is simply the node-to-datum voltage of the respective node \cite{LOC-CAD-EDK:87}. After eliminating any passive internal nodes via Kron reduction \cite{fd-fb:11d}, the input/output behavior of the $n$-port is described in the short-circuit admittance representation by \cite{LOC-CAD-EDK:87}
\begin{equation}\label{Eq:Currr}
I = GV\,,
\end{equation}
where $I = (I_1,\ldots,I_n)$ and $V = (V_1,\ldots,V_n)$ are the vectors of port currents and voltages, and $G$ is the $n \times n$ conductance matrix.
Each port $k$ of our $n$-port is interfaced with a \emph{constant power device}, which constrains the product of the port current $I_k$ and port voltage $V_k$ to be a constant power $P_k$. The power injection $P_k$ is positive for generation and negative for load. An $n$-port with CPDs is shown in Figure \ref{Fig:Circuit_4} for a three-port circuit, where only one CPD has been shown. The port constraints due to CPDs read as
\begin{equation}\label{Eq:Powww}
P = \mathrm{diag}(V)I\,,
\end{equation}
where $P = (P_1,\ldots,P_n)$. Substituting \eqref{Eq:Currr} into \eqref{Eq:Powww}, we arrive at our nonlinear network equations
\begin{equation}\label{Eq:PowerFlow}
\boxed{
P = \mathrm{diag}(V)GV\,.
}
\end{equation}
For future use we collect some useful and well-known facts regarding the conductance matrix $G$ \cite{LOC-CAD-EDK:87}.

\vspace{-1em}

\begin{lemma}[Conductance Matrix]\label{Lem:Cond}
The $n \times n$ conductance matrix $G$ satisfies the following properties:
\begin{enumerate}
\item[(i)] $G = G^T$ is positive semidefinite;
\item[(ii)] $G\vones[n] = \vzeros[n]$;
\item[(iii)] $0 = \lambda_1(G) < \lambda_2(G) \leq \cdots \leq \lambda_n(G)$\,.
\end{enumerate}
\end{lemma}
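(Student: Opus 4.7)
My plan is to derive all three properties from the fact that $G$ inherits the structure of a weighted graph Laplacian from the underlying resistive network. Since node $0$ is electrically isolated from the rest of the circuit and Kron reduction of passive internal nodes preserves the Laplacian structure (cf.\ \cite{fd-fb:11d}), I would first write $G$ in the canonical form $G_{ij} = -g_{ij}$ for $i \neq j$ and $G_{ii} = \sum_{j \neq i} g_{ij}$, where $g_{ij} \geq 0$ is the effective conductance between nodes $i$ and $j$ after reduction. A parallel fact to import from \cite{fd-fb:11d} is that Kron reduction also preserves graph connectedness, so this reduced conductance graph is itself connected on the vertex set $\{1,\ldots,n\}$.

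Given this structure, I would dispatch (i) immediately: symmetry follows from $g_{ij} = g_{ji}$, and positive semidefiniteness from the identity $V^T G V = \sum_{i<j} g_{ij}(V_i - V_j)^2 \geq 0$, which also carries the satisfying physical interpretation of total power dissipated in the branch resistances. Property (ii) is just the statement that the rows of $G$ sum to zero, which follows directly from the Laplacian form above (or equivalently, from the physical observation that a spatially uniform voltage profile drives no branch currents), so $G\vones[n] = \vzeros[n]$.

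For (iii), part (i) gives $\lambda_k(G) \geq 0$ for every $k$, and part (ii) identifies $\vones[n]$ as a null vector, hence $\lambda_1(G) = 0$. The only real content is then the strict inequality $\lambda_2(G) > 0$, which I would establish via the usual connectedness argument: if $V^T G V = 0$, then every term in the sum $\sum_{i<j} g_{ij}(V_i - V_j)^2$ vanishes, forcing $V_i = V_j$ on every edge of the reduced conductance graph, and connectedness then forces $V \in \mathrm{span}(\vones[n])$. Hence $\ker(G)$ is one-dimensional, and all remaining eigenvalues are strictly positive.

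The only non-routine step in this plan is the appeal to the fact that Kron reduction preserves both the Laplacian structure and connectedness of the underlying conductance graph; I would cite \cite{fd-fb:11d,LOC-CAD-EDK:87} for these facts rather than reproduce the argument, since the rest of the lemma is a standard consequence of weighted-Laplacian theory applied to a connected graph.
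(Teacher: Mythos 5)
Your proof is correct and complete; the only point of comparison is that the paper itself does not actually prove this lemma. It states the three properties as ``well-known facts'' with a citation to \cite{LOC-CAD-EDK:87}, and the only item it justifies is (ii), via a physical argument: the ports $\{1,\ldots,n\}$ form a cutset separating them from the isolated datum, so Kirchhoff's current law forces $\vones[n]^T I = \vones[n]^T G V = 0$ for every $V$, whence $G\vones[n] = \vzeros[n]$ by symmetry. Your route to (ii) is the dual, branch-level version of the same fact (a uniform voltage profile drives no branch current because there are no shunt elements to ground), and your treatment of (i) and (iii) via the weighted-Laplacian quadratic form $V^T G V = \sum_{i<j} g_{ij}(V_i - V_j)^2$ and the connectedness argument for $\lambda_2(G) > 0$ supplies precisely the standard argument the paper leaves to the references. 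Your one non-routine appeal --- that Kron reduction preserves both the Laplacian structure and the connectedness of the reduced conductance graph --- is correctly attributed to \cite{fd-fb:11d} and is consistent with the paper's own remark that $G$ may equivalently be viewed as the conductance matrix of the sub-network excluding the datum; note also that the paper's standing assumption that nodes $\{1,\ldots,n\}$ form a connected network is what ultimately licenses the strict inequality in (iii), so it is worth stating that hypothesis explicitly rather than leaving it implicit in the connectedness of the reduced graph.
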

In contrast with nonsingular conductance matrix models derived from reduced node-edge incidence matrices \cite{LOC-CAD-EDK:87} (i.e., incidence matrices in which the row associated to the datum has been removed), the conductance matrix $G$ in our port model is singular due to the isolation of the datum node. 
This can be seen from Figure \ref{Fig:Circuit_4}: nodes $\{1,2,3\}$ form a cutset, and hence the net current flow through this cutset \textemdash{} or equivalently, the sum over all port currents \textemdash{} must be zero. 
In vector notation, this reads as $\vones[n]^TI = 0$, or using the node equations \eqref{Eq:Currr}, that $\vones[n]^TGV = 0$. This equality holds for all port voltages $V$ if and only if $G\vones[n] = 0\cdot\vones[n]$, which is Lemma \ref{Lem:Cond} (ii). {Equivalently, one may consider $G$ as the conductance matrix of the sub-network containing all nodes other than the datum.}

The second eigenvalue $\lambda_2(G)$ of the conductance matrix is called the \emph{algebraic connectivity} \cite{MF:73,GK-MBH-KEB-MJB-BK-DA:06}, and measures how strongly connected the circuit is. Similarly, the ``eigenratio'' $\lambda_n(G)/\lambda_2(G) \geq 1$ quantifies the \emph{heterogeneity} of the circuit \cite{ACM-CSZ-JK:05}. A large eigenratio $\lambda_n(G)/\lambda_2(G)$ corresponds to a heterogeneous and/or weakly connected circuit, while an eigenratio close to unity corresponds to a highly symmetric and dense circuit with nearly uniform conductances.

An \emph{operating point} is any solution $V$ of the circuit equations \eqref{Eq:PowerFlow}. In contrast to standard circuit problems, the circuit described by \eqref{Eq:PowerFlow} has no voltage-regulated sources -- all port voltages are free variables. The absence of voltage-regulated sources makes determining the operating points of \eqref{Eq:PowerFlow} challenging and non-standard.
Intuitively, one might come to the conclusion that the inflexible power demands of the CPDs would conflict with one another, making equilibrium conditions impossible. In the next three sections we develop some basic theoretical results for the circuit model \eqref{Eq:PowerFlow} and find that this intuition fails.

\section{Decomposition of Circuit Equations}
\label{Sec:Dec}

\begin{figure}[ht!]
\begin{center}
\includegraphics[width=0.65\columnwidth]{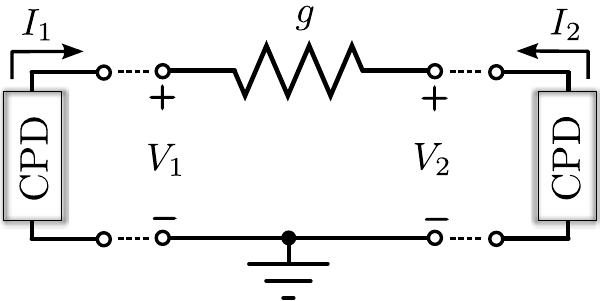}
\caption{{A two-port resistive CPD circuit.}}
\label{Fig:TwoPort}
\end{center}
\vspace{-1em}
\end{figure}

{The starting point for our analysis of \eqref{Eq:PowerFlow} is inspired by the following observations. Under normal conditions, circuits with voltage-regulated nodes (such as power transmission and distribution networks) possess ``almost uniform'' operating points \cite{BG-JWSP-FD-SZ-FB:13zb}, where all nodal voltages are clustered around a nominal value $V_0$. We may express this as
\begin{equation}\label{Eq:Voltage_Decomposition}
V = V_0(\vones[n]+x)\,,
\end{equation}
where $V_0 > 0$ is a nominal voltage level and the vector $x$ is a small, dimensionless deviation variable.\footnote{Without loss of generality, one may assume that $x \in \vones[n]^\perp$.} That is, the voltage profile is the sum of a uniform profile $V_0\vones[n]$, plus a perturbation term described by $x$.} While the solution-space of models such as  \eqref{Eq:PowerFlow} is generally multi-valued and diverse \cite{GSG-AB-AP:08}, we nonetheless are interested in such ``almost uniform'' solutions, as these are the operating points relevant in practice \cite{IAH:02}. Lemma \ref{Lem:Cond} shows that the conductance matrix naturally satisfies a related decomposition, since $G\vones[n] = \vzeros[n]$. Inspired by these properties, we similarly decompose the vector of powers $P = (P_1,\ldots,P_n)$ in \eqref{Eq:PowerFlow} as 
\begin{equation}\label{Eq:Power_Decomposition}
P = \frac{p_{||}}{n}\vones[n] + P_{\perp}\,,
\end{equation}
where $p_{||} \in \real$ and $P_{\perp} \in \vones[n]^\perp$. Such a decomposition is uniquely defined, as one may verify by noting that $p_{||} = \vones[n]^TP = \sum_{k=1}^n P_k$ and that $P_{\perp} = \Pi P$, where $\Pi = I_n - \frac{1}{n}\vones[n]\vones[n]^T$ is the projection matrix onto the subspace $\vones[n]^\perp$. Physically, $p_{||}$ is the total power dissipated in the network (the net difference between sourced and sinked power), while $P_{\perp}$ can be roughly interpreted as the power which flows ``losslessly'' between nodes. Our first result shows that this change of variables allow us to decompose the power balance \eqref{Eq:PowerFlow} into two equations in orthogonal subspaces.


\begin{lemma}[Decomposition] The voltage vector $V=V_0(\vones[n]+x)$ is an operating point of \eqref{Eq:PowerFlow} if and only if
\begin{subequations}
\begin{align}\label{Eq:Ppar}
p_{||} &= V_0^2 x^TGx\,,\\\label{Eq:Pperp}
P_{\perp} &= V_0^2\left(Gx + \mathrm{diag}(x)Gx\right) - \frac{p_{||}}{n}\vones[n]\,.
\end{align}
\end{subequations}
\end{lemma}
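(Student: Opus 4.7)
The plan is to substitute the ansatz $V = V_0(\vones[n]+x)$ directly into the power balance \eqref{Eq:PowerFlow}, expand the right-hand side, and then project the resulting vector equation onto the two orthogonal subspaces $\mathrm{span}(\vones[n])$ and $\vones[n]^{\perp}$ to obtain \eqref{Eq:Ppar} and \eqref{Eq:Pperp} respectively. The main simplification will come from Lemma \ref{Lem:Cond}\,(ii), namely $G\vones[n] = \vzeros[n]$, which annihilates the constant term inside $G(\vones[n]+x)$ and reduces the quadratic form to something that depends only on $x$.

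Explicitly, I would first compute
\[
\mathrm{diag}(V)GV = V_0^2\,\mathrm{diag}(\vones[n]+x)\,G(\vones[n]+x) = V_0^2\bigl(Gx + \mathrm{diag}(x)Gx\bigr),
\]
where the second equality uses $G\vones[n]=\vzeros[n]$ together with $\mathrm{diag}(\vones[n]+x) = I_n + \mathrm{diag}(x)$. Equating this with the decomposition $P = \tfrac{p_{||}}{n}\vones[n] + P_\perp$ gives the single vector identity
\[
\tfrac{p_{||}}{n}\vones[n] + P_\perp \;=\; V_0^2\bigl(Gx + \mathrm{diag}(x)Gx\bigr).
\]

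To split this into the two claimed scalar/vector equations, I would left-multiply by $\vones[n]^T$. Using $\vones[n]^T\vones[n] = n$, $\vones[n]^T P_\perp = 0$ (since $P_\perp \in \vones[n]^\perp$), and the identities $\vones[n]^T G = \vzeros[n]^T$ (by symmetry of $G$ and Lemma \ref{Lem:Cond}\,(ii)) and $\vones[n]^T\mathrm{diag}(x) = x^T$, the right-hand side collapses to $V_0^2 x^T G x$, yielding \eqref{Eq:Ppar}. Equation \eqref{Eq:Pperp} then follows immediately by solving for $P_\perp$ in the original identity. Conversely, adding $\tfrac{p_{||}}{n}\vones[n]$ to \eqref{Eq:Pperp} reconstructs the full vector equation $P = \mathrm{diag}(V)GV$, establishing the ``if'' direction.

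No step is a genuine obstacle here, as the argument is essentially a clean bookkeeping exercise exploiting the kernel property $G\vones[n]=\vzeros[n]$. The only minor care required is ensuring that the decomposition $P = \tfrac{p_{||}}{n}\vones[n] + P_\perp$ is indeed unique (which is immediate from orthogonality of the two subspaces) so that the projection step produces two independent equations rather than one.
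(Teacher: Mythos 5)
Your proof is correct and follows essentially the same route as the paper: substitute the ansatz, use $G\vones[n]=\vzeros[n]$ to collapse the right-hand side to $V_0^2(I_n+\mathrm{diag}(x))Gx$, then extract \eqref{Eq:Ppar} by left-multiplying with $\vones[n]^T$ and \eqref{Eq:Pperp} by isolating $P_\perp$ (the paper applies the projector $\Pi$ instead, which is equivalent). No gaps.
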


\begin{proof}
Substituting \eqref{Eq:Voltage_Decomposition} and \eqref{Eq:Power_Decomposition} into \eqref{Eq:PowerFlow} and using Lemma \ref{Lem:Cond} Property (ii), one obtains
\begin{equation}\label{Eq:Power_Flow_Decomposed}
P_{\perp} + \frac{p_{||}}{n}\vones[n] = V_0^2\left(I_n + \mathrm{diag}(x)\right)Gx\,.
\end{equation}
Equation \eqref{Eq:Ppar} is obtained by left-multiplying \eqref{Eq:Power_Flow_Decomposed} by $\vones[n]^T$, while \eqref{Eq:Pperp} is obtained by left-multiplying \eqref{Eq:Power_Flow_Decomposed} by the projector $\Pi$. The converse direction is immediate.\end{proof}

A \emph{necessary} condition for the existence of an operating point for \eqref{Eq:Ppar}--\eqref{Eq:Pperp} is obtained by noting that the right-hand side of \eqref{Eq:Ppar} is nonnegative. This expresses the fact that the circuit is passive and must dissipate energy, that is, $p_{||} \geq 0$.

\begin{proposition}[Necessary Condition]\label{Prop:Nec}
If $p_{||} < 0$, then \eqref{Eq:Ppar}--\eqref{Eq:Pperp} possesses no operating points. 
\end{proposition}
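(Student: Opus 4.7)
The plan is to argue by contradiction directly from equation \eqref{Eq:Ppar} of the Decomposition Lemma. Suppose, toward a contradiction, that $p_{||} < 0$ and that an operating point $V = V_0(\vones[n]+x)$ nonetheless exists. Then \eqref{Eq:Ppar} must hold, which reads $p_{||} = V_0^2 x^T G x$.

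The key observation is that the right-hand side of this identity is nonnegative. Indeed, by Lemma \ref{Lem:Cond}(i), $G$ is positive semidefinite, so $x^T G x \geq 0$ for every $x \in \real^n$. Moreover $V_0^2 \geq 0$ trivially (and in fact $V_0 > 0$ by the convention in \eqref{Eq:Voltage_Decomposition}). Consequently $V_0^2 x^T G x \geq 0$, which contradicts the assumption $p_{||} < 0$. Therefore no operating point can exist, establishing the claim.

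There is really no obstacle here — the whole content is the positive semidefiniteness of $G$ applied to \eqref{Eq:Ppar}. The only remark worth making is the physical one already flagged by the authors in the sentence preceding the proposition: since $p_{||} = \vones[n]^T P$ is the net power balance of the circuit and $x^T G x$ equals the Joule dissipation (up to the scale $V_0^2$), the identity \eqref{Eq:Ppar} is simply conservation of energy, and a strictly negative $p_{||}$ would require the passive resistive network to source energy.
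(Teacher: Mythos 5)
Your argument is correct and is exactly the paper's reasoning: the authors justify the proposition in the sentence preceding it by noting that the right-hand side of \eqref{Eq:Ppar} is nonnegative because $G$ is positive semidefinite (Lemma \ref{Lem:Cond}(i)), which is precisely your contradiction. Your physical remark about passivity likewise mirrors the paper's own interpretation, so there is nothing to add or correct.
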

%
%
%
%
%
%
%
%
%
%
%
%
{Before proceeding in Section \ref{Sec:Suff} to a general analysis of the equations \eqref{Eq:Ppar}--\eqref{Eq:Pperp}, in Section \ref{Sec:TwoNode} we build intuition for the system behavior by studying the simple but illustrative case of a two port circuit (Figure \ref{Fig:TwoPort}).}

\section{Example: Two Port Circuit}
\label{Sec:TwoNode}

The decomposed circuit equations \eqref{Eq:Ppar}--\eqref{Eq:Pperp} can be solved in closed form 
for the specific case of a two port CPD circuit. 
%
%
%
%

\begin{theorem}[Operating Point for Two Node Circuit]\label{Thm:TwoNode}
Consider the circuit \eqref{Eq:PowerFlow} defined for two ports connected by a conductance $g > 0$. Then the circuit  has a high-voltage operating point $V = V_0(\vones[n]+x)$ with a high average voltage $V_0$ and a small percentage deviation $x$ if and only if
\begin{equation}\label{Eq:2nodeParametric2}
\frac{P_1 + P_2}{|P_1 - P_2|} = \frac{p_{||}}{\|P_{\perp}\|_1} \in {]0,1[}\,.
\end{equation}
If this is condition holds, then the unique operating point is\begin{equation}\label{2node:Esoln2}
V = V_0\left(\begin{bmatrix}1 \\ 1\end{bmatrix} + x_{\rm max}\begin{bmatrix}1 \\ -1\end{bmatrix}\right).
\end{equation}
where 
$$
V_0 = \frac{\|P_{\perp}\|_1}{2\sqrt{gp_{||}}} > 0\,,\qquad x_{\rm max} = \frac{p_{||}}{\|P_{\perp}\|_1} < 1\,.
$$
\end{theorem}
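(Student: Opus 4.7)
The plan is to exploit the fact that in the two-port case every vector involved in the decomposed system \eqref{Eq:Ppar}--\eqref{Eq:Pperp} is parameterized by a single scalar, so the whole problem collapses to two scalar equations that can be solved by inspection.

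First I would substitute the explicit conductance matrix $G = g\bigl(\begin{smallmatrix} 1 & -1 \\ -1 & 1\end{smallmatrix}\bigr)$ and the constrained ansatz $x = \xi(1,-1)^T$ (forced by $x \in \vones[2]^\perp$) into \eqref{Eq:Ppar}--\eqref{Eq:Pperp}. A direct calculation yields $Gx = 2g\xi(1,-1)^T$, $x^TGx = 4g\xi^2$, and $\mathrm{diag}(x)Gx = 2g\xi^2\vones[2]$. Equation \eqref{Eq:Ppar} therefore reduces to the scalar identity $p_{||} = 4gV_0^2\xi^2$.

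Next I would substitute the same expressions into \eqref{Eq:Pperp} and note a clean cancellation: the term $V_0^2\mathrm{diag}(x)Gx = 2gV_0^2\xi^2\vones[2]$ equals $\tfrac{p_{||}}{2}\vones[2]$ by the previous step, so the common-mode contributions vanish and \eqref{Eq:Pperp} collapses to $P_\perp = 2gV_0^2\xi(1,-1)^T$. Since $P_\perp = \Pi P = \tfrac{P_1 - P_2}{2}(1,-1)^T$ and hence $\|P_\perp\|_1 = |P_1 - P_2|$, matching the two sides pins down $\xi = (P_1-P_2)/(4gV_0^2)$, and combining with $p_{||} = 4gV_0^2\xi^2$ gives $|\xi|\,\|P_\perp\|_1 = p_{||}$, i.e., $|\xi| = p_{||}/\|P_\perp\|_1 =: x_{\rm max}$. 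After relabeling ports so that $P_1 \geq P_2$ one has $\xi = x_{\rm max}\geq 0$, recovering the form $x = x_{\rm max}(1,-1)^T$ claimed in the theorem.

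Finally I would read off $V_0$ and the existence criterion. From $p_{||} = 4gV_0^2 x_{\rm max}^2$ one obtains $V_0^2 = \|P_\perp\|_1^2/(4gp_{||})$, which demands $p_{||}>0$ for a positive real root and yields $V_0 = \|P_\perp\|_1/(2\sqrt{gp_{||}})$. The high-voltage stipulation $V_1, V_2 > 0$ is equivalent to $x_{\rm max} < 1$, and these two constraints combine into \eqref{Eq:2nodeParametric2}. Conversely, when the ratio lies in ${]0,1[}$, the explicit formulas define a positive voltage vector satisfying the decomposed equations, and uniqueness is immediate because $V_0$ and $\xi$ were pinned down algebraically. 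There is no serious obstacle in this proof; the only subtle point is the sign of the free scalar $\xi$, which is dictated by $\mathrm{sign}(P_1-P_2)$ and requires the relabeling convention above to match the absolute-value quantity $x_{\rm max}$ that appears in the statement.
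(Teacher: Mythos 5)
Your proof is correct and follows essentially the same route as the paper: substitute $G = g\bigl(\begin{smallmatrix}1 & -1\\ -1 & 1\end{smallmatrix}\bigr)$ and $x \in \vones[2]^\perp$ into the decomposed equations \eqref{Eq:Ppar}--\eqref{Eq:Pperp}, collapse them to two scalar relations, and solve for $x_{\rm max}$ and $V_0$. If anything you are more careful than the paper's own terse argument, since you make explicit the cancellation of the common-mode term $V_0^2\,\mathrm{diag}(x)Gx - \tfrac{p_{||}}{2}\vones[2]$, the sign convention on the free scalar $\xi$ (the paper's ansatz $x = x_{\rm max}[1;1]$ is a typo for $[1;-1]$), and the role of $p_{||}>0$ and $x_{\rm max}<1$ in the if-and-only-if claim.
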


%
%
\begin{proof} 
Writing $x = x_{\rm max}\cdot[1;1]$ and $G = g \cdot [1,-1;-1,1]$, 
the decomposition \eqref{Eq:Ppar}--\eqref{Eq:Pperp} reduces to
\begin{align*}
p_{||} &= 4gV_0^2x_{\rm max}^2\,,\\
\|P_{\perp}\|_1 &= 4gV_0^2x_{\rm max}\,,
\end{align*} 
where $p_{||} = P_1 + P_2$ and $\|P_{\perp}\|_1 = \sum_{j=1}^2{|P_{\perp,i}|} = |P_1-P_2|$.
The first equation is simply \eqref{Eq:Ppar}, while the second is obtained by subtracting the two linearly dependent equations in \eqref{Eq:Pperp}. Hence we calculate that $x_{\rm max} = p_{||}/\|P_{\perp}\|_1$ and $V_0 = \pm \|P_{\perp}\|_1/(2\sqrt{gp_{||}})$ and we obtain \eqref{2node:Esoln2}.
%
\hfill \end{proof}

%
%
%
%
\smallskip

Note from the condition \eqref{Eq:2nodeParametric2} that if $P_1 > 0$, then $P_2 < 0$ and vice versa. That is, one node must generate power if the other consumes power. Thus, $\|P_\perp\|_1 = |P_1 - P_2|$ is the \emph{absolute sum} of power injections/demands at the ports of the network, and the parametric condition \eqref{Eq:2nodeParametric2} admits the elegant physical interpretation that the resistive losses in the network $p_{||} = P_1 + P_2$ should be small compared to the gross power transferred through the networks ports (Figure \ref{Fig:TwoPort}).

The dependence of both $x_{\rm max}$ and $V_0$ on the resistive losses $p_{||}$ is intuitive: as $p_{||}$ decreases, the deviation vector $x$ becomes smaller and the average voltage level $V_0$ rises. That is, the voltage profile becomes increasingly uniform. Conversely, as $p_{||}$ increases, $x$ increases in size linearly, while $V_0$ decreases with the square root of the losses. We invite the reader to compare this result with standard load flow results for networks with fixed-voltage buses \cite[Chapter 1]{TVC-CV:98}. Unlike the classic results in \cite{TVC-CV:98}, the existence condition \eqref{Eq:2nodeParametric2} does not depend on the line conductance $g$.

\section{Sufficient Conditions for General Topologies}
\label{Sec:Suff}

{
We now present sufficient conditions on the circuit parameters of a general $n$-port which guarantee that all operating points $V = V_0(\vones[n]+x)$ of \eqref{Eq:PowerFlow} have high average voltage levels $V_0$ and small percentage deviations $x$, generalizing the ``small losses relative to total power'' intuition developed in Section \ref{Sec:TwoNode}. Unlike Theorem \ref{Thm:TwoNode}, we do not provide exact solutions for the circuit operating points, but only bounds for where in voltage-space operating points are located. The results are applicable to general $n$-ports of arbitrary topology.}

\begin{theorem}[Operating Regions I]\label{Thm:2BoundAltGammat} Consider the circuit equations \eqref{Eq:PowerFlow} with the voltage and power decompositions \eqref{Eq:Voltage_Decomposition}--\eqref{Eq:Power_Decomposition} leading to the decomposed circuit equations \eqref{Eq:Ppar}--\eqref{Eq:Pperp}. Assume that the circuit parameters satisfy
\begin{align}\label{Eq:Gamma2Norm}
\Delta &\triangleq \frac{p_{||}}{\|P_{\perp}\|_2-p_{||}}\frac{\lambda_n(G)}{\lambda_2(G)} \, \in \,{]0,\frac{1}{2}[}\,,
\end{align}
and accordingly define 
\begin{subequations}
\begin{align}
	V_{\rm min} &\triangleq \frac{1-\Delta}{\Delta}\sqrt{\frac{p_{||}}{\lambda_2(G)}} > 0\,,
	\label{Eq:2NormVOminAltGamma}\\
		\Bigl.
		x_{\rm max} &\triangleq \frac{\Delta}{1-\Delta} < 1\,.
	\label{Eq:2normboundsAltGamma}
\end{align}
\end{subequations}
Then any operating point $V = V_0(\vones[n]+x)$ of the circuit which exists satisfies $V_0 \geq V_{\rm min}$ and $\|x\|_{\infty} \leq x_{\rm max}$.
\end{theorem}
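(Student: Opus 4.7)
The plan is to combine a spectral estimate derived from \eqref{Eq:Ppar} with a triangle-inequality bound on \eqref{Eq:Pperp} to produce a lower bound on $V_0$, then deduce the $\|x\|_\infty$ bound as a corollary. Throughout, I assume the canonical choice $x\in\vones[n]^\perp$ (as in the footnote to \eqref{Eq:Voltage_Decomposition}) and abbreviate $c := \sqrt{p_{||}/\lambda_2(G)}$.

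First, from \eqref{Eq:Ppar} together with the Rayleigh-quotient bound $x^TGx \ge \lambda_2(G)\|x\|_2^2$ (valid on $\vones[n]^\perp$ by Lemma \ref{Lem:Cond}), I obtain $\lambda_2(G)\|x\|_2^2 \le p_{||}/V_0^2$, i.e.\ $V_0\|x\|_2\le c$. Since $\|x\|_\infty \le \|x\|_2$, this also gives $V_0\|x\|_\infty \le c$.

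Second, I rewrite \eqref{Eq:Pperp} as $P_\perp = V_0^2 Gx + V_0^2\Pi\mathrm{diag}(x)Gx$, absorbing the $\frac{p_{||}}{n}\vones[n]$ term into the projection using $\vones[n]^T\mathrm{diag}(x)Gx = x^TGx = p_{||}/V_0^2$. The triangle inequality together with the operator-norm bounds $\|Gx\|_2\le\lambda_n(G)\|x\|_2$ and $\|\Pi\mathrm{diag}(x)Gx\|_2\le\|x\|_\infty\|Gx\|_2$ then yields
$$
\|P_\perp\|_2 \le V_0^2\lambda_n(G)\|x\|_2(1+\|x\|_\infty).
$$

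Third, I factor the right-hand side as $\lambda_n(G)\bigl[V_0(V_0\|x\|_2) + (V_0\|x\|_\infty)(V_0\|x\|_2)\bigr]$ and insert the two first-step bounds to get $\|P_\perp\|_2\le\lambda_n(G)\,c\,(V_0+c)$, which rearranges to
$$
V_0 \;\ge\; \frac{\|P_\perp\|_2}{\lambda_n(G)\,c} - c.
$$
The remaining task, which I expect to be the main obstacle, is the algebraic reconciliation of this derived bound with $V_{\min}$. A direct manipulation using $c^2 = p_{||}/\lambda_2(G)$ and the identity $\Delta(\|P_\perp\|_2-p_{||})\lambda_2(G) = p_{||}\lambda_n(G)$ shows that the right-hand side equals $V_{\min} + c\,\lambda_2(G)/\lambda_n(G)$, which is strictly larger than $V_{\min}$; the hypothesis $\Delta\in{]0,\tfrac{1}{2}[}$ is precisely what keeps $V_{\min}>0$ and the chain of inequalities nontrivial. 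Once $V_0\ge V_{\min}$ is secured, the $x$-bound is immediate from step one: $\|x\|_\infty\le c/V_0\le c/V_{\min}=\Delta/(1-\Delta)=x_{\max}$.
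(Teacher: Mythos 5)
Your proof is correct and follows essentially the same route as the paper's: the Rayleigh-quotient bound on \eqref{Eq:Ppar} giving $V_0\|x\|_2 \le \sqrt{p_{||}/\lambda_2(G)}$, a $2$-norm estimate on \eqref{Eq:Pperp}, and algebraic rearrangement to recover $V_{\rm min}$ and then $x_{\rm max}$. Your one refinement \textemdash{} absorbing the $\frac{p_{||}}{n}\vones[n]$ term by writing $P_\perp = V_0^2\,\Pi\left(I_n+\mathrm{diag}(x)\right)Gx$ and using $\|\Pi\|_2=1$, instead of bounding that term by $p_{||}/\sqrt{n}\le p_{||}$ as the paper does \textemdash{} is valid and in fact yields the marginally sharper intermediate bound $V_0 \ge V_{\rm min} + \sqrt{p_{||}\lambda_2(G)}/\lambda_n(G)$, which of course still implies the claimed conclusions.
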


{Theorem \ref{Thm:2BoundAltGammat} can be understood methodologically as follows. First, using the given data of the problem, compute $\Delta$ from the definition in \eqref{Eq:Gamma2Norm}. If $0 < \Delta < 1/2$, then one may compute $V_{\rm min}$ and $x_{\rm max}$ from \eqref{Eq:2NormVOminAltGamma}--\eqref{Eq:2normboundsAltGamma}, and Theorem \ref{Thm:2BoundAltGammat} guarantees that all operating points $V = V_0(\vones[n]+x)$ of the circuit will have a high average voltage $V_0 \geq V_{\rm min}$ with minimal deviations $\|x\|_{\infty} \leq x_{\rm min}$. 
A graphical interpretation of Theorem \ref{Thm:2BoundAltGammat} for a three-port circuit will be presented later in Section \ref{Sec:Case}.
%
%
%
If $\Delta \leq 0$ or $\Delta \geq 1/2$, we can make no claims regarding the locations of operating points.}

\begin{remark}[\textbf{Interpretation and Qualitative Behavior}]\label{Rem:QualBeh}
The parametric condition \eqref{Eq:Gamma2Norm} implies\footnote{This follows from the fact that $y \leq y/(1-y)$ for $y \in {[0,1[}$.} that $2p_{||}/\|P_{\perp}\|_2 < \lambda_2(G)/\lambda_n(G)$, which may be interpreted as follows: the ratio of resistive losses to total power transmitted must be smaller than the spectral gap of the conductance matrix.
%
%

{It is useful to compare the sufficient condition \eqref{Eq:Gamma2Norm} to the exact two-port condition \eqref{Eq:2nodeParametric2} in Theorem \ref{Thm:TwoNode} by specializing \eqref{Eq:Gamma2Norm} for a two-port network. In this case it is known that $\lambda_2(G) = \lambda_n(G) = g$, and the condition \eqref{Eq:Gamma2Norm} reduces to $p_{||}/\|P_\perp\|_2 < 1/3$. Thus, compared to \eqref{Eq:2nodeParametric2}, the results of Theorem \ref{Thm:2BoundAltGammat} are more conservative due to the $2$-norm and the factor of $1/3$.
}
%
%
%
%
%
Just as was observed in the two-port case of Theorem \ref{Thm:TwoNode}, the variables $x_{\rm max}$ and $V_{\rm min}$ are closely related. In the regime of small losses $p_{||}/\|P_{\perp}\|_2 {<\!\!<} 1$, we observe from \eqref{Eq:Gamma2Norm} that $\Delta \simeq \frac{p_{||}}{\|P_\perp\|_2}\frac{\lambda_n(G)}{\lambda_2(G)} <\!\!< 1$. It follows from \eqref{Eq:2NormVOminAltGamma}--\eqref{Eq:2normboundsAltGamma} that
$$
V_{\rm min} \sim \sqrt{\frac{\lambda_2(G)}{p_{||}}}\,\frac{\|P_{\perp}\|_2}{\lambda_n(G)}\,, \qquad x_{\rm max} \sim \frac{p_{||}}{\|P_{\perp}\|_{2}}\frac{\lambda_n(G)}{\lambda_2(G)}\,.
$$
Thus, the lower bound on the mean voltage level $V_0$ scales with the resistive losses $1/\sqrt{p_{||}}$, while the voltage percentage deviations $x$ are bounded linearly with $p_{||}$. In this regime of low resistive losses, the mean voltage $V_0$ is sensitive to any increase in network losses and will quickly decline, while the percentage deviations $x$ will, at most, increase linearly. These observations lead us to conclude that, much like in standard power systems, an accurate balancing of power supply and power demand is crucial to the efficient and stable operation of CPD-dominated DC microgrids.
\oprocend
\end{remark}

\begin{proof}
Since for any $x \in \vones[n]^\perp$ it holds that $\lambda_2(G)\|x\|_2^2 \leq x^TGx \leq \lambda_n(G)\|x\|_2^2$, from \eqref{Eq:Ppar} we have that
\begin{equation}\label{Eq:2NormUpperBoundOnX2}
\frac{1}{V_0}\sqrt{\frac{p_{||}}{\lambda_n(G)}} \leq \|x\|_2 \leq \frac{1}{V_0}\sqrt{\frac{p_{||}}{\lambda_2(G)}}.
\end{equation}
From \eqref{Eq:Pperp} it holds that
\begin{equation*}
\frac{1}{V_0^2}P_{\perp} = Gx + \mathrm{diag}(x)Gx - \frac{p_{||}}{V_0^2n}\vones[n]\,.
\end{equation*}
Taking norms on both sides and bounding, we obtain
\begin{equation}\label{Eq:Qperpnormed}
\frac{1}{V_0^2}\|P_{\perp}\|_2 \leq \lambda_n(G)(\|x\|_2+\|x\|_2^2) + \frac{1}{V_0^2\sqrt{n}}p_{||}\,
\end{equation}
where we have used the fact that $\|\mathrm{diag}(x)\|_2 = \|x\|_\infty \leq \|x\|_2$. Now using the upper bound in \eqref{Eq:2NormUpperBoundOnX2} and rearranging, \eqref{Eq:Qperpnormed} becomes
\begin{align*}
V_0 &\geq \left(\frac{p_{||}}{\lambda_2(G)}\right)^{1/2}\frac{\lambda_2(G)}{\lambda_n(G)}\left[\frac{\|P_{\perp}\|_2}{p_{||}}-\left(\frac{\lambda_n(G)}{\lambda_2(G)}+\frac{1}{\sqrt{n}}\right)\right]\\
& \geq \left(\frac{p_{||}}{\lambda_2(G)}\right)^{1/2}\frac{\lambda_2(G)}{\lambda_n(G)}\left[\frac{\|P_{\perp}\|_2}{p_{||}}-\left(\frac{\lambda_n(G)}{\lambda_2(G)}+1\right)\right]\\
&= \left(\frac{p_{||}}{\lambda_2(G)}\right)^{1/2}\left[\frac{\lambda_2(G)}{\lambda_n(G)}
\left(\frac{\|P_\perp\|_2}{p_{||}}-1\right)-1\right]\\
&= \left(\frac{p_{||}}{\lambda_2(G)}\right)^{1/2}\left(\frac{1}{\Delta}-1\right) = V_{\rm min}\,.
\end{align*}
We now calculate using \eqref{Eq:2normboundsAltGamma}--\eqref{Eq:2NormVOminAltGamma} and \eqref{Eq:2NormUpperBoundOnX2} that
\begin{align*}
\|x\|_\infty \leq \|x\|_2 &\leq \frac{1}{V_0}\sqrt{\frac{p_{||}}{\lambda_2(G)}} \leq \frac{1}{V_{\rm min}}\sqrt{\frac{p_{||}}{\lambda_2(G)}} = x_{\rm max}\,,
\end{align*}
and hence $\|x\|_{\infty} \leq x_{\rm max}$ as claimed. \hfill
\end{proof}

For completeness we present an analogous $\infty$-norm condition, which similarly restricts the heterogeneity of the network by comparing the smallest branch conductance $g_{\rm min}$ to twice the largest nodal degree $\|G\|_{\infty}$. Depending on the particular topology and heterogeneity of the circuit under consideration, Theorem \ref{Thm:InfBoundAltGammat} may offer more or less conservative results when compared to Theorem \ref{Thm:2BoundAltGammat}.

\begin{theorem}[Operating Regions II]\label{Thm:InfBoundAltGammat} Consider the circuit equations \eqref{Eq:PowerFlow} with the voltage and power decompositions \eqref{Eq:Voltage_Decomposition}--\eqref{Eq:Power_Decomposition} leading to the decomposed circuit equations \eqref{Eq:Ppar}--\eqref{Eq:Pperp}. Assume that the circuit parameters satisfy
\begin{align}\label{Eq:GammaInfNorm}
\widetilde{\Delta} &\triangleq \frac{p_{||}}{\|P_{\perp}\|_{\infty}-p_{||}}\frac{\|G\|_{\infty}}{g_{\rm min}} \, \in \, {]0,1/2[}\,,
\end{align}
where $g_{\rm min} > 0$ is the smallest branch conductance, and accordingly define 
{
$$
\widetilde{V}_{\rm min} \triangleq \frac{1-\widetilde{\Delta}}{\widetilde{\Delta}}\sqrt{\frac{p_{||}}{g_{\rm min}}}\,, \quad \widetilde{x}_{\rm max} \triangleq \frac{\widetilde{\Delta}}{1-\widetilde{\Delta}}\,.
$$
}%
Then all operating points $V = V_0(\vones[n]+x)$ of the circuit satisfy $V_0 \geq \widetilde{V}_{\rm min}$ and $\|x\|_{\infty} \leq \widetilde{x}_{\rm max}$.
\end{theorem}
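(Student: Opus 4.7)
The plan is to mirror the proof of Theorem~\ref{Thm:2BoundAltGammat}, replacing the $2$-norm quadratic-form and matrix-norm bounds with their $\infty$-norm counterparts. Specifically, $\lambda_2(G)$ and $\lambda_n(G)$ will be replaced by $g_{\rm min}$ and $\|G\|_\infty$ respectively, since these play the analogous roles for componentwise bounds involving $G$.

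First, starting from \eqref{Eq:Ppar}, I would establish the key auxiliary upper bound
\begin{equation*}
\|x\|_\infty \leq \frac{1}{V_0}\sqrt{\frac{p_{||}}{g_{\rm min}}},
\end{equation*}
which is the $\infty$-norm analog of the inequality $\|x\|_2 \leq \frac{1}{V_0}\sqrt{p_{||}/\lambda_2(G)}$ used in the proof of Theorem~\ref{Thm:2BoundAltGammat}. The idea is to exploit the expansion $x^T G x = \sum_{(i,j) \in E} g_{ij}(x_i - x_j)^2 \geq g_{\rm min}\sum_{(i,j) \in E}(x_i-x_j)^2$ together with the structure of $x \in \vones[n]^\perp$ (so that $\max_i x_i$ and $\min_i x_i$ have opposite signs) to extract a lower bound on $x^T G x$ proportional to $g_{\rm min}\|x\|_\infty^2$.

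Next, I would take the $\infty$-norm of \eqref{Eq:Pperp} and apply the submultiplicative bounds $\|Gx\|_\infty \leq \|G\|_\infty\|x\|_\infty$ and $\|\mathrm{diag}(x)\|_\infty = \|x\|_\infty$ to obtain
\begin{equation*}
\frac{1}{V_0^2}\|P_\perp\|_\infty \leq \|G\|_\infty\bigl(\|x\|_\infty + \|x\|_\infty^2\bigr) + \frac{p_{||}}{V_0^2 n}.
\end{equation*}
Substituting the Step~1 upper bound on $\|x\|_\infty$ into this inequality, using $1/n \leq 1$ to absorb the constant term (exactly as the original proof uses $1/\sqrt{n} \leq 1$), and rearranging to isolate $V_0$, the algebra will parallel the chain of inequalities in the proof of Theorem~\ref{Thm:2BoundAltGammat} and produce $V_0 \geq \widetilde{V}_{\rm min}$. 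Plugging this lower bound back into the Step~1 inequality will then yield $\|x\|_\infty \leq \widetilde{x}_{\rm max}$.

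The main obstacle is Step~1: the $\infty$-norm analog of the Rayleigh-type inequality does not follow immediately from Lemma~\ref{Lem:Cond}, and likely requires topological reasoning along the connected circuit graph (e.g., concentrating the sum $\sum_{(i,j) \in E}(x_i-x_j)^2$ onto a single edge incident to the node achieving $\|x\|_\infty$, or invoking a cut-based argument using $\vones[n]^T x = 0$). Once this inequality is in hand, the remaining steps are mechanical and mirror the $2$-norm proof nearly verbatim.
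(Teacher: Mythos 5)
The paper itself omits this proof, stating only that it parallels that of Theorem~\ref{Thm:2BoundAltGammat}, and your reconstruction is exactly the intended route: your Step~2 and the concluding algebra are correct and do reproduce $\widetilde{V}_{\rm min}$ and $\widetilde{x}_{\rm max}$ precisely (after the substitutions $\lambda_2(G)\to g_{\rm min}$, $\lambda_n(G)\to\|G\|_\infty$, $\|\cdot\|_2\to\|\cdot\|_\infty$, and $1/\sqrt{n}\to 1/n\leq 1$, the same chain of inequalities yields $V_0 \geq \sqrt{p_{||}/g_{\rm min}}\,(1/\widetilde{\Delta}-1)=\widetilde{V}_{\rm min}$, and back-substitution gives $\|x\|_\infty\leq\widetilde{x}_{\rm max}$).

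However, the obstacle you flag in Step~1 is a genuine gap, and the remedies you sketch do not close it. The inequality $x^TGx \geq g_{\rm min}\|x\|_\infty^2$ for $x\in\vones[n]^{\perp}$ is \emph{false} for general connected topologies: on a path of $n$ nodes with unit conductances and $x_i = 1 - 2(i-1)/(n-1)$, one has $\vones[n]^Tx=0$ and $\|x\|_\infty=1$, yet $x^TGx = 4/(n-1) < 1$ for $n\geq 6$. Concentrating $\sum_{(i,j)\in E}(x_i-x_j)^2$ on a single edge incident to the extremal node fails because all neighbors of that node may carry nearly the same value; a cut (co-area) argument gives only $\sum_{(i,j)\in E}|x_i-x_j|\geq \|x\|_\infty$, and squaring via Cauchy--Schwarz then costs a factor of $|E|$. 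What is actually provable is $x^TGx \geq g_{\rm min}\max_{(i,j)\in E}(x_i-x_j)^2$ combined with $\|x\|_\infty \leq \max_i x_i - \min_i x_i \leq \mathrm{diam}(G)\cdot\max_{(i,j)\in E}|x_i-x_j|$, which injects a graph-diameter factor into $\widetilde{V}_{\rm min}$ and $\widetilde{x}_{\rm max}$. Your clean Step~1 bound does hold when every pair of ports is adjacent --- which is generically the case after Kron reduction of a connected interior, and holds in the paper's three-port example --- since then the node attaining $\|x\|_\infty$ and a node of opposite sign (one exists because $\vones[n]^Tx=0$) share an edge, so $x^TGx\geq g_{km}(x_k-x_m)^2\geq g_{\rm min}\|x\|_\infty^2$. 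To make the argument rigorous you must either restrict to this complete-reduced-network setting or carry the diameter factor through the remaining steps.
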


The proof of Theorem~\ref{Thm:InfBoundAltGammat} is similar to the proof of Theorem~\ref{Thm:2BoundAltGammat} and will not be reported here.

\section{Case Study: Three Node Circuit}
\label{Sec:Case}

To illustrate and test the results presented in Section \ref{Sec:Suff}, we consider the network of Figure \ref{Fig:Circuit_4} with parameters $P_1 = P_3 = -3$kW, $P_2 = 6.6$kW, $g_{12} = g_{23} = 1$S, and $g_{13} = 0.5$S. 
The (in this case, unique) nonlinear solution to \eqref{Eq:PowerFlow} is given by $V = (201.4 \mathrm{V}, 205.2 \mathrm{V}, 223.6 \mathrm{V})$, and hence $V_0 = 210.1$V and $x = (-0.04,-0.02,0.06)$. For these parameters, one calculates using \eqref{Eq:Gamma2Norm} of Theorem \ref{Thm:2BoundAltGammat} that $\Delta = 0.12$. The sufficient condition $0 < \Delta < 1/2$ is satisfied, and one readily calculates from \eqref{Eq:2NormVOminAltGamma} and \eqref{Eq:2normboundsAltGamma} that $V_0 \geq V_{\rm min} = 122V$ and $\|x\|_{\infty} \leq x_{\rm max} = 0.14$. The solution space is depicted in Figure \ref{Fig:example}, which shows that the solution $V$ lies inside the set defined in Theorem \ref{Thm:2BoundAltGammat}, as expected. While the bounds developed in Theorem \ref{Thm:2BoundAltGammat} become increasingly conservative as the number of ports increases, they can be considered as a first step into understanding the fundamental physics of such circuits composed of constant power devices.

\begin{figure}[t]
\begin{center}
\includegraphics[width=0.78\columnwidth]{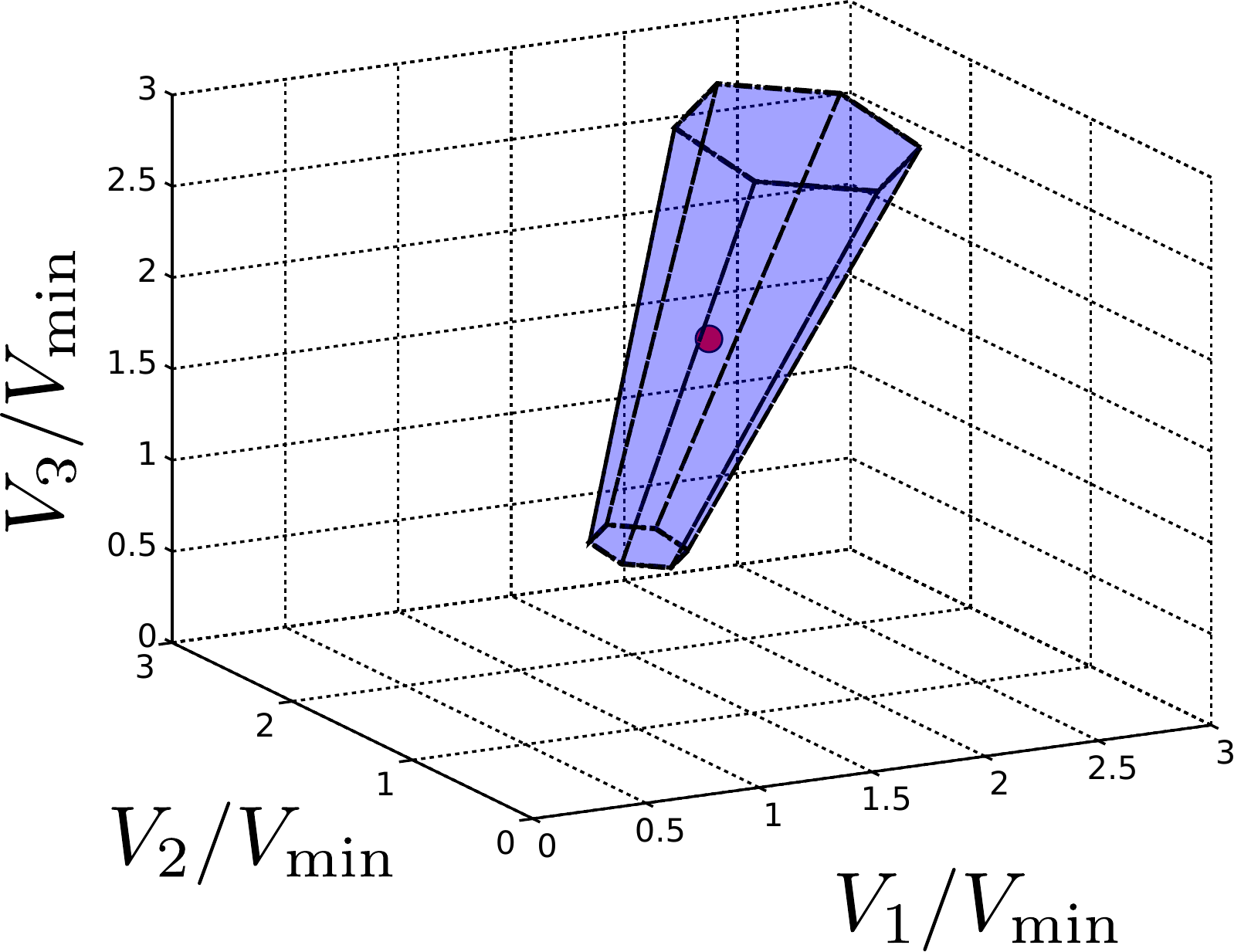}
\caption{Operating point (red) and permitted operating region (blue) for three bus network.}
\label{Fig:example}
\end{center}
\vspace{-1em}
\end{figure}

\section{Conclusions}
{We have examined the behavior of linear resistive circuits with constant power devices interfaced at the circuit ports, and have provided sufficient conditions on the network parameters for circuit operating points to have high average voltage levels and minimal differences in nodal voltage. A curious observation is that despite the absence of voltage-regulated nodes, the network lower bounds its own mean voltage level, as quantified by \eqref{Eq:2NormVOminAltGamma}.}


{While we have partially characterized the circuit operating points through bounds, it remains an open question whether the circuit equations \eqref{Eq:PowerFlow} are exactly solvable for general topologies, and how the results presented change with full ZIP load models. As an outlook to control applications, the relative scaling of the voltage profile heterogeneity $x_{\rm max}$ and the minimum average voltage $V_{\rm min}$ reported in Remark \ref{Rem:QualBeh} hint at strategies for voltage profile control in DC microgrids, in which renewable sources are optimized and dispatched to simultaneously adjust the coupled values $P_{\perp}$ and $p_{||}$.}



\renewcommand{\baselinestretch}{0.98} 

\bibliographystyle{IEEEtran}


\end{document}